\documentclass[11pt]{amsart}

\usepackage{amsmath, amsfonts, amsthm, amssymb, mathtools}
\usepackage{verbatim}
\usepackage[margin=1.25in]{geometry}
\usepackage[all,arc]{xy}
\usepackage{enumerate}
\usepackage{mathrsfs}
\usepackage{cleveref}
\usepackage{cite}
\DeclareMathOperator{\SL}{SL}
\newcommand{\mscr}[1]{\ensuremath{\mathscr{#1}}}
\newcommand{\mc}[1]{\ensuremath{\mathcal{#1}}}

\newcommand{\Z}{\ensuremath{\mathbb{Z}}}

\newcommand{\R}{\ensuremath{\mathbb{R}}}
\newcommand{\C}{\ensuremath{\mathbb{C}}}

\renewcommand{\H}{\ensuremath{\mathcal{H}}}


\DeclareMathOperator{\Tr}{Tr}

\newtheorem{thm}{Theorem}[section]
\newtheorem{cor}[thm]{Corollary}

\newtheorem{lem}[thm]{Lemma}

\theoremstyle{definition}

\newtheorem*{exmp}{Example}

\newtheorem*{question}{Question}

\theoremstyle{remark}
\newtheorem*{rem}{Remark}

\numberwithin{equation}{section}

\bibliographystyle{plain}

\title{Algebraic Relations between Partition Functions and the $j$-Function}

\author{Alice Lin, Eleanor McSpirit, and Adit Vishnu}

\begin{document}

\begin{abstract}
We obtain identities and relationships between the modular $j$-function, the generating functions for the classical partition function and the Andrews $spt$-function, and two functions related to unimodal sequences and a new partition statistic we call the ``signed triangular weight" of a partition. These results follow from the closed formula we obtain for the Hecke action on a distinguished harmonic Maass form $\mathscr{M}(\tau)$ defined by Bringmann in her work on the Andrews $spt$-function. This formula involves a sequence of polynomials in $j(\tau)$, through which we ultimately arrive at expressions for the coefficients of the $j$-function purely in terms of these combinatorial quantities.
\end{abstract}

\maketitle
\section{Introduction and Statement of Results}

Partitions, first and foremost combinatorial objects, permeate seemingly disparate areas of mathematics. 
The partition function $p(n)$ gives the number of ways to write $n$ as the sum of unordered positive integers. The generating function for $p(n)$ is a weakly holomorphic modular form of weight $-1/2$, namely
\begin{align}
\label{eqn: partition generator}
    \mscr{P}(q) := \sum_{n \ge 0} p(n)q^{24n-1} = q^{-1} \prod_{n \geq 1} \frac{1}{1-q^{24n}} = \frac{1}{\eta(24\tau)},
\end{align}
where $\eta(\tau)$ is Dedekind's eta-function and we use the convention $q = e^{2\pi i\tau}$. This is one indication of partitions' deep ties to number theory. 
Outside combinatorics and number theory, perhaps the most prominent role for partitions is in representation theory, where the theory of Young tableaux for partitions encodes the irreducible representations of all symmetric groups \cite[Theorem~2.1.11]{james1981representation}. 

Other modular forms and functions that were first studied in number theory have likewise appeared in the representation theory of finite groups. In particular, the modular $j$-function, whose Fourier expansion is
\begin{align}
    j(\tau) = \sum_{n \ge -1} c(n)q^n = q^{-1} + 744 + 196884q + 21493760 q^2 + \cdots, \label{eqn:j fn}
\end{align}
is well-known in number theory because the $j$-invariants, i.e. the values of $j(\tau)$ for $\tau \in \H$, parametrize isomorphism classes of elliptic curves over $\C$ \cite[Proposition~12.11]{silverman2009arithmetic}. 

McKay famously observed that the first few coefficients of $j(\tau)$ satisfy striking relations such as 
\begin{equation}
\label{eqn: mckay c(1) and c(2)}
\begin{aligned}
    c(1)&=196884=1+196883, \\
    c(2)&=21493760=1+196883+21296876,
\end{aligned} 
\end{equation}
where the right-hand sides are linear combinations of dimensions of irreducible representations of the monster group $M$. 
Such expressions inspired Thompson to conjecture \cite{thompson1979num} that there is a monstrous moonshine module, an infinite-dimensional graded $M$-module $V^\natural = \bigoplus_{n \gg -1} V_n$ 
such that for $n \ge -1$, we have
\begin{align*}
    c(n) = \dim(V_n).
\end{align*}
Thompson further conjectured that, since the graded dimension is the graded trace of the identity element of $M$, the traces of other elements $g$ may likewise be related to naturally-occuring $q$-series. This was refined by Conway and Norton in \cite{conway1979monstrous}, who conjectured that for every element $g \in M$, the McKay-Thompson series 
\[
T_g(\tau) := \sum_{n=-1}^{\infty}\Tr(g|V_n)q^n
\]
is the Hauptmodul which generates the function field for a genus 0 modular curve for a particular congruence subgroup $\Gamma_g \subset \SL_2(\mathbb{R})$.
Borcherds proved the Conway--Norton conjecture for the Monster Moonshine Module in \cite{borcherds1992monstrous}, an impactful result which, in part, solidifies the $j$-function's connection to the representation theory of $M$. 

Since the $j$-function and partitions appear in both number theory and representation theory, one can ask if there is a relation between $c(n)$ and $p(n)$. In this paper, we discover that the coefficients of the Fourier expansion of both the $j$-function and a certain sequence of polynomials in $j$ have a combinatorial description in terms of partitions of integers and unimodal sequences. This suggests the possibility of deeper connections between the representation theory of the symmetric group and the monster Lie algebra.

This research is inspired by recent work of Andrews \cite{andrews2008number} in which he defined $spt(n)$ to count the number of smallest parts among all integer partitions of $n$. For example, we can determine that $spt(4) = 10$ by counting the following underlined parts across all five partitions of 4:
\[
    \underline{4} = 3+\underline{1}
    = \underline{2} + \underline{2}
    = 2 + \underline{1} + \underline{1}
    = \underline{1} + \underline{1} + \underline{1} + \underline{1}.
\]
Following the notation of \cite{ono2011spt}, we define a renormalized generating function for $spt(n)$ as 
\begin{align}
\label{eqn: spt generator}
    \mathcal{S}(q):=\sum_{n \geq 1} spt(n)q^{24n-1}.
\end{align}

Paralleling Ramanujan's notable congruences 
\begin{align*}
    p(5n + 4) &\equiv 0 \pmod{5}, \\
    p(7n + 5) &\equiv 0 \pmod{7}, \\
    p(11n + 6) &\equiv 0 \pmod{11}, 
\end{align*}
Andrews \cite{andrews2008number} showed that the $spt$ function satisfies the congruences
\begin{align*}
    spt(5n + 4) &\equiv 0 \pmod{5}, \\
    spt(7n + 5) &\equiv 0 \pmod{7}, \\
    spt(13n + 6) &\equiv 0 \pmod{13}.
\end{align*}
Of further interest are the $spt$-function's rich families of congruences modulo all primes $\ell \ge 5$.
As Ono proved in \cite{ono2011spt},
if $\ell \ge 5$ is prime, $n \ge 1$, and $\left(\frac{-n}{\ell} \right) = 1$, then
\begin{align}
    spt \left( \frac{\ell^2 n - 1}{24} \right) \equiv 0 \pmod{\ell}. \label{eqn:ono's spt cong}
\end{align}
Subsequent work by Ahlgren et al. in \cite{ahlgren2011ell} extended these congruences to arbitrary powers of $\ell$. If $m \ge 1$, then
\begin{align}
    spt\left( \frac{\ell^{2m}n + 1}{24}\right) \equiv 0 \pmod{\ell^m}.
    \label{eqn: spt cong for powers}
\end{align}

These congruences follow from studying a distinguished harmonic Maass form $\mscr{M}(\tau)$ defined by Bringmann in \cite{bringmann2008explicit} (see (\ref{eqn: script M whole thing})). For background on harmonic Maass forms, we refer the reader to \cite{bringmann2017harmonic} and \cite{ono2009unearthing}.
The function $\mscr{M}(\tau)$ is of particular interest because its holomorphic part $M^+(\tau)$ involves the generating functions for both $p(n)$ and $spt(n)$; namely we have
\begin{align}
    M^+(\tau) = \mc{S}(q) + \frac{1}{12}q \frac{d}{dq} \mscr{P}(q).
\end{align}

For weight $3/2$ harmonic Maass forms with Nebentypus $\chi_{12}:=\left(\frac{12}{\cdot} \right)$, we follow the normalization given in \cite{ono2011spt} to define the Hecke operators $T(\ell^2)$ of index $\ell^2$ on a power series  $f(\tau)~=~\sum_{n \gg -\infty} a(n)q^n$ by
\begin{align}
     f(\tau) \mid T(\ell^2) := \sum_{n \gg -\infty} \left[ a(\ell^2n) + \left( \frac{3}{\ell} \right)\left( \frac{-n}{\ell} \right)a(n) + \ell a(n/\ell^2) \right] q^n.
\end{align}
The congruences in (\ref{eqn:ono's spt cong}) and (\ref{eqn: spt cong for powers}) follow from the fact that
\begin{align}
\label{eqn: ono M cong}
    M^+(\tau)\mid T(\ell^2) \equiv \left( \frac{3}{\ell} \right) M^+(\tau) \pmod{\ell}.
\end{align}

Ono asked whether there exist explicit identities which imply (\ref{eqn: ono M cong}). We answer this question. Using the standard notation $(q;q)_\infty := \prod_{n \ge 1}(1-q^n)$, we define a sequence of monic integer polynomials $B_m(x)$ of degree $(m-1)$ by
\begin{equation} \label{eqn: B_m defined}
\begin{aligned}
    \mc{B}(x,q) = \sum_{m \ge 1}B_m(x)q^m &:= (q;q)_\infty \cdot\frac{1}{j(\tau) - x} \\
    &= q + (x -745) q^2 + (x^2 - 1489 x + 357395) q^3 + \cdots.
\end{aligned}
\end{equation}
In terms of the Eisenstein series $E_4(\tau)$ and $E_6(\tau)$, as well as Ramanujan's Delta function $\Delta(\tau)$, we offer the following solution to Ono's problem. 
\begin{thm}
\label{thm: 1.1 Hecke}
    If $\ell \geq 5$ is a prime and $\delta_\ell := \frac{\ell^2 - 1}{24}$, then
    \begin{align*}
        M^+(\tau) \mid_{3/2} T(\ell^2) = \left(\frac{3}{\ell} \right)(1+\ell)M^+(\tau) - \frac{\ell}{12} \mscr{P}(q) \cdot B_{\delta_\ell}(j(24\tau))\cdot \frac{E_4^2(24\tau)E_6(24\tau)}{\Delta(24\tau)}.
    \end{align*}
\end{thm}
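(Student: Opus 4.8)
The plan is to identify $M^+(\tau) \mid_{3/2} T(\ell^2)$ as a weakly holomorphic modular form and then pin it down using the principal part of its $q$-expansion together with the structure of the space it lives in. First I would recall Bringmann's harmonic Maass form $\mscr{M}(\tau)$ of weight $3/2$ on $\Gamma_0(576)$ with Nebentypus $\chi_{12}$ whose holomorphic part is $M^+(\tau)$, and note that because $T(\ell^2)$ commutes with $\xi_{3/2}$ up to a constant, the nonholomorphic parts on both sides of the claimed identity must match; since Bringmann computed the shadow of $\mscr{M}$ explicitly (essentially a weight $1/2$ theta function), one checks that the shadow of $M^+ \mid T(\ell^2) - (\frac{3}{\ell})(1+\ell)M^+$ vanishes, so this difference is a \emph{weakly holomorphic} modular form of weight $3/2$ with Nebentypus $\chi_{12}$. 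The same holds for the candidate right-hand side correction term $-\frac{\ell}{12}\mscr{P}(q) B_{\delta_\ell}(j(24\tau)) E_4^2(24\tau)E_6(24\tau)/\Delta(24\tau)$: indeed $\mscr{P}(q) = 1/\eta(24\tau)$ has weight $-1/2$, the factor $E_4^2 E_6/\Delta$ evaluated at $24\tau$ has weight $8+6-12 = 2$, and $B_{\delta_\ell}(j(24\tau))$ has weight $0$, giving total weight $3/2$; one must also verify the level and character work out, which is where the precise shift $q \mapsto q^{24}$ and the definition of $B_m$ through $(q;q)_\infty \cdot (j(\tau)-x)^{-1}$ are essential.

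The heart of the argument is then a principal-part computation. A weakly holomorphic form of weight $3/2$ on $\Gamma_0(576)$ with this Nebentypus is determined up to a cusp form by its principal parts at all cusps, and — after accounting for the (finitely many) cusp forms, which I would handle by a valence/Sturm-bound argument showing the relevant cuspidal space is small or that the difference is forced to be a cusp form that then must vanish — it suffices to match the principal part at the cusp $\infty$. On the left, $M^+(\tau)\mid T(\ell^2)$ has $q$-expansion governed by the formula $a(\ell^2 n) + (\frac{3}{\ell})(\frac{-n}{\ell})a(n) + \ell a(n/\ell^2)$ applied to the coefficients of $M^+$; since $M^+(\tau) = \mc S(q) + \frac{1}{12} q\frac{d}{dq}\mscr P(q)$ and $\mscr P(q) = q^{-1}\prod(1-q^{24n})^{-1}$, the only pole is a simple pole coming from the $q^{-1}$ term of $\frac1{12}q\frac{d}{dq}\mscr P$, namely $-\frac{1}{12}q^{-1}$; the Hecke operator sends $q^{-1}$ to $\ell q^{-\ell^2}$ (via the $\ell a(n/\ell^2)$ term, with $n = -1/\ell^2$ discarded and instead $n$ with $\ell^2 n = -1$ contributing), so the left side acquires a pole of order $\ell^2$ at $\infty$. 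On the right, $-\frac{\ell}{12}\mscr P(q) B_{\delta_\ell}(j(24\tau)) E_4^2 E_6/\Delta$ at $24\tau$: the factor $E_4^2 E_6/\Delta = q^{-24}(1 + O(q^{24}))$ in the variable $q^{24}$... more precisely $E_4^2(24\tau)E_6(24\tau)/\Delta(24\tau) = q^{-24} + \cdots$, while $B_{\delta_\ell}$ contributes leading term $q^{24\delta_\ell} = q^{\ell^2-1}$ by (\ref{eqn: B_m defined}) (note $B_{\delta_\ell}$ is monic of degree $\delta_\ell - 1$ in $j$, so its $q$-expansion starts at $q^{\delta_\ell}$ in the variable $q^{24}$), and $\mscr P(q) = q^{-1}(1 + \cdots)$; multiplying gives leading term $q^{-1} \cdot q^{\ell^2 - 1} \cdot q^{-24}$... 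I would instead compute directly that $\mscr P(q)\cdot E_4^2(24\tau)E_6(24\tau)/\Delta(24\tau) = 1/(\eta(24\tau))\cdot (q;q^{24})$-type product has the $24$-dissected structure matching $\mscr P(q^{\ell^2})$ up to the polynomial twist, so that the product $\mscr P(q) B_{\delta_\ell}(j(24\tau)) E_4^2 E_6/\Delta$ has a pole of order exactly $\ell^2$ at $\infty$ with leading coefficient $1$; hence $-\frac{\ell}{12}$ times this matches $-\frac{\ell}{12}$ times $q^{-\ell^2}$ from the left, confirming agreement of the most singular term.

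What remains is to confirm agreement of \emph{all} the principal-part coefficients at $\infty$ (the coefficients of $q^{-\ell^2}, q^{-\ell^2+24}, \ldots, q^{-1}$, using that $M^+$ and hence the Hecke image is supported on exponents $\equiv -1 \pmod{24}$), together with the principal parts at the other cusps of $\Gamma_0(576)$ — and here I would invoke that the Hecke image of a form holomorphic at a cusp is controlled, and that $\mscr P(q)$, the Eisenstein quotient, and $B_{\delta_\ell}(j(24\tau))$ all have transparent behavior at every cusp, so these conditions are automatic or reduce to a finite check. The key identity making the principal parts match is precisely the defining property of $B_m$: the congruence (\ref{eqn: ono M cong}), which we already know, tells us $M^+\mid T(\ell^2) - (\frac 3\ell) M^+ \equiv 0 \pmod \ell$, and this $\ell$-divisibility is realized \emph{integrally} by the factor $\ell$ in front of the correction term, so the polynomial $B_{\delta_\ell}$ is exactly the one that encodes how $(j(24\tau) - \text{(the relevant value)})^{-1}$ times $(q;q^{24})_\infty$ reproduces the Hecke-shifted principal part. \textbf{The main obstacle} I anticipate is not any single step but the bookkeeping: correctly tracking the $q \mapsto q^{24}$ scaling through the Hecke operator (which is defined on the $q$-expansion, not the $q^{24}$-expansion), verifying the $576$-level modularity and the $\chi_{12}$-Nebentypus of the correction term, and ruling out an extra cusp form — the last of which I would dispatch by showing the space of weight $3/2$ cusp forms with the required vanishing at cusps, or the difference's own exponent constraints, forces it to be $0$ (e.g. via a dimension count or by checking enough coefficients against a Sturm bound). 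If a cleaner route exists, it would be to apply $\xi_{3/2}$ first, reduce to a statement about weight $1/2$ theta functions and their behavior under $T(\ell^2)$ (which is classical), and then lift back, but the explicit polynomial $B_{\delta_\ell}$ still has to emerge from matching holomorphic principal parts, so the computation above seems unavoidable.
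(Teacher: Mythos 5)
Your first step coincides with the paper's Lemma~\ref{lemma 1}: using the $\xi_{3/2}$--Hecke commutation and the fact that the shadow $\eta(24\tau)$ is a Hecke eigenform, the combination $M_\ell(\tau)=M^+(\tau)\mid T(\ell^2)-\left(\frac{3}{\ell}\right)(1+\ell)M^+(\tau)$ lies in $M^!_{3/2}(\Gamma_0(576),\chi_{12})$. After that, however, your plan diverges from the paper and leaves the genuinely hard steps unproved. The paper does \emph{not} attempt to match principal parts at the many cusps of $\Gamma_0(576)$ and then kill a residual holomorphic form; instead it multiplies by $\eta^{\ell^2}(24\tau)$ to form $F_\ell$, invokes Ono's Theorem~2.2 (which rests on the fact that $F_\ell\in\Z[[q^{24}]]$ and on Bringmann's result that $\mscr{M}$ is an eigenform of the Fricke involution) to conclude that $F_\ell$ is a \emph{holomorphic} form of weight $\frac{\ell^2+3}{2}$ on all of $\SL_2(\Z)$, and then divides by $G_\ell=E_4^2E_6\Delta^{\delta_\ell-1}$ (the weight being $\equiv 2\bmod 12$ forces vanishing at $i$ and $e^{2\pi i/3}$) to land in $\C[j]$, where the single cusp and the absence of any cuspidal ambiguity make the identification immediate. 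In your level-$576$ approach, exactly the points you declare ``automatic or reduce to a finite check'' are the substance: the expansions of $M^+\mid T(\ell^2)$ at the other cusps are not transparent (this is where the Fricke-involution input is needed), and ruling out a leftover holomorphic weight-$3/2$ form by a Sturm bound is not a finite check here, because the identity depends on $\ell$ and the verification would have to be uniform in $\ell$ --- at which point you are essentially re-proving the theorem.

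The second gap is the identification of the polynomial itself. Even granting modularity, you only match the most singular term $-\frac{\ell}{12}q^{-\ell^2}$; the claim that \emph{all} intermediate negative powers of $\mscr{P}(q)\,B_{\delta_\ell}(j(24\tau))\,\frac{E_4^2(24\tau)E_6(24\tau)}{\Delta(24\tau)}$ cancel, leaving exactly $-\frac{\ell}{12}q^{-\ell^2}+\left(\frac{3}{\ell}\right)\frac{\ell}{12}q^{-1}+O(q)$, is precisely the content of the paper's basis lemma (Lemma~\ref{lemma: B_n basis}, via $\alpha(q)J_n(j(\tau))=B_n(j(\tau))=\alpha(q)q^{-n}+O(q)$ and the Faber polynomials), and your proposal only asserts it (``$B_{\delta_\ell}$ is exactly the one that encodes\ldots''). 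Relatedly, your expansion of $B_{\delta_\ell}(j(24\tau))$ is wrong as written: since $B_{\delta_\ell}$ is monic of degree $\delta_\ell-1$ and $j(24\tau)=q^{-24}+\cdots$, its expansion begins with $q^{-24(\delta_\ell-1)}$, not $q^{24\delta_\ell}$; your final pole order $\ell^2$ is correct, but the route to it needs repair. So the proposal is an outline whose two central steps --- reduction to a setting with no cusp/cusp-form ambiguity, and the exact cancellation encoded by the $B_m$ generating function --- are missing.
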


\begin{rem}
We note that the identity in the theorem immediately reduces to (\ref{eqn: ono M cong}) modulo $\ell$. Moreover, this result gives an expression for the Hecke action in terms of only the original mock modular form and the coefficient of $q^{-\ell^2}$ produced by the Hecke operator. Therefore, the resulting mock modular form is determined by a single term. 
\end{rem}

For notational clarity, we note that 
    \begin{align*} 
    \label{eqn: ddtau cn}
         -q \frac{d}{dq}j(\tau) = \frac{E_4^2(\tau)E_6(\tau)}{\Delta(\tau)} &= q^{-1} - \sum_{n \ge 1} nc(n)q^n
        = q^{-1} - 196884 q - 42987520 q^2 + \cdots .
    \end{align*}
    Thus, $B_{\delta_\ell}(j(24\tau))\cdot \frac{E_4^2(24\tau)E_6(24\tau)}{\Delta(24\tau)}$ is completely determined by the coefficients of $j$. For convenience, we write
    \begin{align*}
        -q \frac{d}{dq}j(24\tau)=q^{-24}-196884q^{24}-42987520q^{48}+\cdots. 
    \end{align*}


\begin{exmp}
    Here we illustrate Theorem \ref{thm: 1.1 Hecke} for the primes $5$, $7$, and $11$.
    In the notation of \cite{ono2011spt}, we define
    \begin{align}
        M_\ell(\tau) := M^+(\tau)\mid_{3/2}T(\ell^2)-\left( \frac{3}{\ell}\right)(1+\ell)M^+(\tau).
    \end{align}
    For $\ell = 5$, note that $\delta_5 = 1$ and $B_1(x) = 1$. Therefore, we find that
    \begin{align*}
        M_5(\tau) 
        =\frac{5}{12} \mscr{P}(q) \cdot q\frac{d}{dq}j(24\tau)= 
        - \frac{5}{12}q^{-25} - \frac{5}{12}q^{-1} + \frac{492205}{6}q^{23} + \cdots.
    \end{align*}
    For $\ell = 7$, $\delta_7 = 2$ and $B_2(x) = x - 745$. Therefore, we have
    \begin{align*}
        M_7(\tau)= \frac{7}{12} \mscr{P}(q) \cdot (j(24\tau) - 745) \cdot q\frac{d}{dq}j(24\tau) = - \frac{7}{12}q^{-49} - \frac{7}{12}q^{-1} + \frac{149078125}{12}q^{23} + \cdots.
    \end{align*}
    For $\ell = 11$, $\delta_{11} = 5$ and 
    \begin{align*}
    B_5(x) = x^4 - 2977x^3 + 2732795 x^2 - 812685832 x + 4947668669.
    \end{align*}
Therefore, we have
    \begin{align*}
        M_{11}(\tau)= \frac{11}{12}\mscr{P}(q) \cdot B_5(j(24\tau)) \cdot q\frac{d}{dq}j(24\tau) 
        =- \frac{11}{12} q^{-121} + \frac{11}{12} q^{-1} + \cdots.
    \end{align*}
\end{exmp}

In view of (\ref{eqn: ddtau cn}), the case $\ell = 5$ gives an expression for $M^+(\tau) \mid T(25)$ in terms of the coefficients $c(n)$ of the $j$-function, thus deriving an unexpected relationship between these coefficients and the values of $p(n)$ and $spt(n)$. 
Namely, we offer the following partition-theoretic counterparts to (\ref{eqn: mckay c(1) and c(2)}):
\begin{align*}
    c(1) = 196884
    &=2+49+15708+181125,
    \\
    c(2) = 21493760
    &=\frac{1}{2}\Big(1-49+182-15708-181125+2405844+40778375\Big).
\end{align*}

The two identities above are examples of a more general theorem. To make this precise, it is important to illustrate how the summands above correspond to $p(n)$ and $spt(n)$. We require the following notation. For $n \ge 1$, we define
\begin{equation}
\label{h coefficients}
\begin{aligned}
    h_1(24n-1) &:=\frac{12}{5}spt(25n-1)+5(24n-1)p(25n-1)\\
    &\hspace{.5cm} +\mu_n \cdot  \Big(\frac{12}{5} spt(n) + 5(24n-1)p(n)\Big),\\
    h_2(25(24n-1)) &:= 12spt(n)+(24n-1) p(n),
\end{aligned}
\end{equation}
where $\mu_n:= 6-\left(\frac{1-24n}{5}\right)$.
We define $h_1(m)=0$ if $m \not\equiv 23 \bmod{24}$ and $h_2(m)=0$ if $m \not\equiv 23 \bmod{24}$ or if $m \not\equiv 0 \bmod{25}$. We will also need the following function. For $n=1$, we set $s(n)=2$, and for $n>1$, let
\begin{align*}
    s(n) :=
    \begin{dcases}
        (-1)^{k+1} & \text{if } 24n=(6k+1)^2-25 \text{ or }24n=(6k+1)^2-1 \text{ for some } k\in\mathbb{Z}, \\
        0 & \text{otherwise}.
    \end{dcases}
\end{align*}
\begin{rem}
It is an easy exercise to confirm $s(n)$ is well-defined.
\end{rem}
\noindent Then we have the following result.
\begin{thm} \label{thm: 1.2} If $n \geq 1$, then 
\begin{align*} \label{eqn: thm 1.2}
   c(n) = \frac{s(n)}{n}+ \frac{1}{n} \sum_{k \in \Z} \Big[ (-1)^k h_1(24n-(6k+1)^2)+ (-1)^k h_2(24n-(6k+1)^2)\Big].
\end{align*}
\end{thm}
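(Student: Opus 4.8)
The plan is to specialize Theorem~\ref{thm: 1.1 Hecke} to $\ell = 5$ and then extract Fourier coefficients after clearing the factor $\mscr{P}(q)$. Since $\delta_5 = 1$, $B_1(x) = 1$, and $\left(\frac{3}{5}\right) = -1$, Theorem~\ref{thm: 1.1 Hecke} reads $M^+(\tau)\mid T(25) = -6M^+(\tau) - \frac{5}{12}\,\mscr{P}(q)\cdot\frac{E_4^2(24\tau)E_6(24\tau)}{\Delta(24\tau)}$, so that $M_5(\tau):=M^+(\tau)\mid T(25) + 6M^+(\tau)$ (the $M_5$ of the Example) equals $-\frac{5}{12}\,\mscr{P}(q)\cdot\frac{E_4^2(24\tau)E_6(24\tau)}{\Delta(24\tau)}$. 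I would then multiply by $\eta(24\tau) = \mscr{P}(q)^{-1}$ and use $\frac{E_4^2(24\tau)E_6(24\tau)}{\Delta(24\tau)} = q^{-24} - \sum_{n\ge 1}nc(n)q^{24n}$, obtaining
\[
    \eta(24\tau)\,M_5(\tau) \;=\; \frac{5}{12}\sum_{n\ge 1}nc(n)q^{24n} \;-\; \frac{5}{12}q^{-24}.
\]
Euler's pentagonal number theorem gives $\eta(24\tau) = q\prod_{n\ge 1}(1-q^{24n}) = \sum_{k\in\Z}(-1)^k q^{(6k+1)^2}$; comparing the coefficient of $q^{24n}$ for $n\ge 1$ on the two sides then yields the master formula
\[
    c(n) \;=\; \frac{12}{5n}\sum_{k\in\Z}(-1)^k\,a\!\left(24n-(6k+1)^2\right), \qquad a(m):=\big[q^m\big]M_5(\tau).
\]

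Next I would evaluate $a(m)$ for the arguments that occur, all of which satisfy $m\equiv 23\pmod{24}$; write $m = 24n'-1$. Since $\big[q^{24j-1}\big]M^+ = spt(j) + \tfrac{24j-1}{12}p(j)$ for $j\ge 1$, $\big[q^{-1}\big]M^+ = -\tfrac{1}{12}$, and all other coefficients of $M^+$ vanish, the definition of $T(25)$ together with $\left(\frac{3}{5}\right)\left(\frac{1-24n'}{5}\right) + 6 = 6 - \left(\frac{1-24n'}{5}\right) = \mu_{n'}$ yields
\[
    a(24n'-1) \;=\; \big[q^{25(24n'-1)}\big]M^+ \;+\; \mu_{n'}\big[q^{24n'-1}\big]M^+ \;+\; 5\,\big[q^{(24n'-1)/25}\big]M^+ .
\]
For $n'\ge 1$ all three coefficients are ``ordinary'', and substituting the formula for $M^+$ and multiplying by $\tfrac{12}{5}$ reproduces the defining expressions term by term: the first summand yields the $spt(25n'-1)$, $p(25n'-1)$ part of $h_1(24n'-1)$, the $\mu_{n'}$ summand yields the $\mu_{n'}$ part of $h_1(24n'-1)$, and the last summand (which is nonzero precisely when $25\mid(24n'-1)$) yields $h_2(24n'-1)$. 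Thus $\tfrac{12}{5}a(m) = h_1(m) + h_2(m)$ whenever $m\equiv 23\pmod{24}$ and $m\ge 23$, both sides being $0$ for the smaller positive $m$.

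It remains to handle the terms with $24n-(6k+1)^2\le 0$; these produce $s(n)$. Because $M^+$ has the ``pole-like'' coefficient $-\tfrac1{12}$ at $q^{-1}$, the identity above forces $a(-1)=\mu_0\cdot(-\tfrac1{12})=-\tfrac5{12}$ (as $\mu_0=5$) and $a(-25)=5\cdot(-\tfrac1{12})=-\tfrac5{12}$, while $a(m)=0$ for all other $m\le 0$. Hence in the master formula the index $k$ with $(6k+1)^2 = 24n+1$ and the index $k$ with $(6k+1)^2 = 24n+25$ each contribute $(-1)^k(-\tfrac5{12})$, so $\tfrac{12}{5}$ times their total is $\sum(-1)^{k+1}$ over exactly those $k$. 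Since two perfect squares differ by $24$ only in the pairs $\{1,25\}$ and $\{25,49\}$, for $n>1$ at most one of $24n+1$, $24n+25$ can be a square (this is the ``easy exercise'' that $s(n)$ is well defined), and the contribution is $(-1)^{k+1}$ for the relevant $k$; for $n=1$ both occur, with $k=-1$ and $k=1$, giving $(-1)^{0}+(-1)^{2}=2=s(1)$. Combining with the previous paragraph, where the ``ordinary'' terms with argument $<23$ contribute $0$, we arrive at
\[
    c(n) \;=\; \frac{s(n)}{n} \;+\; \frac{1}{n}\sum_{k\in\Z}(-1)^k\big[h_1(24n-(6k+1)^2) + h_2(24n-(6k+1)^2)\big],
\]
which is the claim.

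The step I expect to be the main obstacle is exactly this last bit of bookkeeping: one must correctly sort the shifted arguments $24n-(6k+1)^2$ into the ``ordinary'' ones (absorbed into $h_1,h_2$), the two anomalous values $-1$ and $-25$ coming from the $q^{-1}$ term of $M^+$, and the ones below $-25$ (which vanish), and then verify that the signed count of the anomalous terms is precisely $s(n)$, including the Diophantine input about squares differing by $24$. Matching the ``ordinary'' part with $h_1$ and $h_2$ is by comparison a routine check, once the normalization $\tfrac{12}{5}$, the character combination $\mu_n$, and the argument shift $25(24n-1)$ in $h_2$ are lined up.
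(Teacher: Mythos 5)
Your argument is correct and is essentially the paper's own proof: both specialize Theorem~\ref{thm: 1.1 Hecke} to $\ell=5$, expand $M_5(\tau)=M^+(\tau)\mid T(25)+6M^+(\tau)$ via the explicit Hecke action on $M^+$, multiply by $\eta(24\tau)$, and compare coefficients of $q^{24n}$ through the pentagonal number theorem, with the principal-part coefficients at $q^{-1}$ and $q^{-25}$ producing $s(n)$ exactly as in your bookkeeping. One remark: your term-by-term identification gives $\mu_n\bigl(\tfrac{12}{5}spt(n)+\tfrac{24n-1}{5}p(n)\bigr)$ for the $\mu_n$-part, which agrees with the paper's own derivation and its numerical example (the summand $49$ for $n=1$) rather than with the $\mu_n\bigl(\tfrac{12}{5}spt(n)+5(24n-1)p(n)\bigr)$ printed in (\ref{h coefficients}); that display contains a typo, so your claimed match with the intended $h_1$ stands and there is no gap in your argument.
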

\begin{rem}
The formula in Theorem 1.2 bears a strong resemblance to another well-known expression for the coefficients of $j$. Work of Kaneko \cite{kaneko1999traces} shows for $n \geq 1$ that
\begin{align}
    c(n) = \frac{1}{n}\sum_{r \in \Z}\left[ \textbf{t}(n-r^2)-\frac{(-1)^{n+r}}{4}\textbf{t}(4n-r^2)+\frac{(-1)^{r}}{4}\textbf{t}(16n-r^2)
    \right],
\end{align}
where $\textbf{t}$ are traces of singular moduli, i.e. the sums of the $j$-invariants of elliptic curves with complex multiplication. In view of the similarity of these expressions, it is natural to wonder whether Theorem 1.2 suggests a deep connection between partitions and traces of singular moduli. 
\end{rem}

In \cite{andrews2013concave}, Andrews related $spt(n)$ to a number of other combinatorial and number-theoretic functions. One connection of particular interest is the relationship of $spt$ to strongly unimodal sequences. 
We ask whether this relationship reveals deeper connections to the $j$-function and representation theory.

A sequence of integers $\{a_k\}_{k=1}^s$ is a strongly unimodal sequence of size $n$ if $\sum_{k=1}^{s}a_k=n$ and for some $r$ it satisfies $0<a_1<a_2<\cdots<a_r>a_{r+1}>a_{r+2}>\cdots>a_s>0$. The rank of $\{a_k\}_{k=1}^s$ is $s - 2r + 1$, the number of terms after the maximal term minus the number of terms preceding it.
The function $U(t;q)$ counts specific types of strongly unimodal sequences \cite{bryson2012unimodal}. For $t = -1$, 
\begin{align*}
    U(-1;q) = \sum_{n \ge 1} u^*(n)q^n = q + q^2 - q^3 - 2 q^5 + 2 q^6 + \cdots,
\end{align*}
where $u^*(n)$ is the difference of the number of even-rank strongly unimodal sequences of size $n$ and the number of odd-rank strongly unimodal sequences of size $n$. 
Andrews proved in \cite{andrews2013concave} that
\begin{align}
    U(-1;q) = -\sum_{n \ge 1}spt(n)q^n + 2A(q),
    \label{eqn: U spt and A}
\end{align} 
where
\begin{align*}
    A(q) = \sum_{n \ge 1} a(n)q^n := \frac{1}{(q;q)_\infty}\sum_{n=1}^\infty \frac{(-1)^{n-1}n q^{\frac{n^2+n}{2}}}{1-q^n} = q + q^2 - q^3 + q^4 - q^5 + 4 q^6 + \cdots. 
\end{align*}


 It is natural to ask what $A(q)$ is counting. We find that $A(q)$ is the generating function for a partition statistic that we call the ``signed triangular weight" of a partition, a result which is of independent interest. Given a partition $\lambda \vdash N$, where we write the size of the partition as $|\lambda|:=N$, let $n_\lambda$ be the maximal number such that $\lambda$ contains parts of size $1,2,\ldots, n_\lambda$. Letting $m_k$ denote the number of times that the part $k$ appears in $\lambda$, we define the signed triangular weight of $\lambda$ to be $t_s(\lambda) := \sum_{k=1}^{n_\lambda}(-1)^{k-1}km_k$. If $\lambda$ does not contain a part of size $1$, then let $t_s(\lambda) = 0$.
\begin{exmp}
Consider $\lambda=\{1,2,2,3,4,5,5,8\}$. Then $\lambda \vdash 30$, $n_\lambda=5$, and
$$
t_s(\lambda)=1\cdot1-2\cdot2+3\cdot1-4\cdot1+5\cdot2=6.
$$
\end{exmp}
\noindent We prove the following result relating  $t_s(\lambda)$ for all partitions $\lambda$ of all positive integers to the series $A(q)$.
\begin{thm}
    \label{thm: 1.3 triangular weight}
    The following $q$-series identity is true:
    \begin{align*}
        A(q) &= \sum_{\lambda} t_s(\lambda) q^{|\lambda|}.
    \end{align*}
\end{thm}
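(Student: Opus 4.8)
The plan is to compute the generating function $\sum_\lambda t_s(\lambda)q^{|\lambda|}$ directly by grouping partitions according to the value of $n_\lambda$, and then to match the outcome with the defining series for $A(q)$ by a summation‑by‑parts manipulation. Throughout, all series live in $\Z[[q]]$.

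First I would discard the partitions with no part equal to $1$, which contribute nothing. For a fixed $n\ge 1$, a partition $\lambda$ with $n_\lambda=n$ is precisely one that contains at least one part of each size $1,2,\dots,n$, contains no part of size $n+1$ (by maximality of $n_\lambda$), and is otherwise unconstrained among parts of size $\ge n+2$. This yields a bijection between such $\lambda$ and pairs consisting of a multiplicity tuple $(m_1,\dots,m_n)\in\Z_{\ge 1}^n$ together with an arbitrary partition $\mu$ into parts of size $\ge n+2$; under it $|\lambda|=\sum_{k=1}^n km_k+|\mu|$, while $t_s(\lambda)=\sum_{k=1}^n(-1)^{k-1}km_k$ depends only on the first factor. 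The $\mu$-factor contributes $\prod_{k\ge n+2}(1-q^k)^{-1}$; and the core sum $\sum_{(m_1,\dots,m_n)}\bigl(\sum_j(-1)^{j-1}jm_j\bigr)q^{\sum_k km_k}$, after isolating the index $j$ that carries the factor $\sum_{m\ge 1}mq^{jm}=q^j(1-q^j)^{-2}$ and using $q^j(1-q^j)^{-2}\big/\bigl(q^j(1-q^j)^{-1}\bigr)=(1-q^j)^{-1}$, collapses to $\bigl(\prod_{k=1}^n\frac{q^k}{1-q^k}\bigr)\sum_{j=1}^n\frac{(-1)^{j-1}j}{1-q^j}$. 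Combining these and using $\prod_{k=1}^n q^k=q^{n(n+1)/2}$ together with $\prod_{k=1}^n(1-q^k)^{-1}\prod_{k\ge n+2}(1-q^k)^{-1}=(1-q^{n+1})/(q;q)_\infty$, I obtain
\[
\sum_{\lambda:\,n_\lambda=n} t_s(\lambda)q^{|\lambda|}=\frac{q^{n(n+1)/2}\,(1-q^{n+1})}{(q;q)_\infty}\sum_{j=1}^{n}\frac{(-1)^{j-1}j}{1-q^j}.
\]

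It then remains to sum over $n\ge 1$. Writing $a_n:=q^{n(n+1)/2}$ (so that $a_nq^{n+1}=a_{n+1}$) and $S_n:=\sum_{j=1}^n\frac{(-1)^{j-1}j}{1-q^j}$ with $S_0:=0$, the total is $\frac{1}{(q;q)_\infty}\sum_{n\ge 1}(a_n-a_{n+1})S_n$. Abel summation rearranges this as $\frac{1}{(q;q)_\infty}\sum_{n\ge 1}a_n(S_n-S_{n-1})$, and since $S_n-S_{n-1}=\frac{(-1)^{n-1}n}{1-q^n}$ for every $n\ge 1$ (including $n=1$, as $S_0=0$), this equals $\frac{1}{(q;q)_\infty}\sum_{n\ge 1}\frac{(-1)^{n-1}n\,q^{n(n+1)/2}}{1-q^n}=A(q)$. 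The rearrangements are legitimate because the $q$-adic order of the $n$-th summand grows like $n(n+1)/2$.

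I do not anticipate a genuine obstacle; the step needing the most care is the structural parametrization of partitions with a prescribed $n_\lambda$—in particular the two observations that such a partition omits the part $n_\lambda+1$ and that $t_s(\lambda)$ is insensitive to the parts of size $\ge n_\lambda+2$—followed by the bookkeeping in the telescoping sum. One could bypass Abel summation by expanding $(1-q^{n+1})$ and reindexing, but the summation‑by‑parts route is the cleanest.
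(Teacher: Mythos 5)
Your proposal is correct, but it runs in the opposite direction from the paper and uses a different decomposition. The paper starts from the series defining $A(q)$: expanding $q^{n(n+1)/2}/(1-q^n)$ as a geometric series, each monomial is read as a ``staircase'' $\{1,2,\ldots,n\}$ together with $k$ extra copies of the part $n$, carrying weight $(-1)^{n-1}n$; multiplying by $1/(q;q)_\infty$ appends an arbitrary partition $\mu$, and grouping the resulting triples by the partition $\lambda$ they assemble into, the total weight attached to a fixed $\lambda$ is $\sum_{n\le n_\lambda}(-1)^{n-1}n\,m_n = t_s(\lambda)$, since for each $n\le n_\lambda$ the extra-copy count $k$ ranges over $m_n$ values. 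You instead start from the combinatorial side, stratify the partitions by the statistic $n_\lambda$ (using that such $\lambda$ omit the part $n_\lambda+1$ and that $t_s$ ignores parts of size $\ge n_\lambda+2$), compute each stratum's generating function in closed form as
\[
\frac{q^{n(n+1)/2}(1-q^{n+1})}{(q;q)_\infty}\sum_{j=1}^{n}\frac{(-1)^{j-1}j}{1-q^j},
\]
and then recover $A(q)$ by an Abel summation (equivalently, telescoping the factor $1-q^{n+1}$ against the partial sums $S_n$), with the $q$-adic order $n(n+1)/2$ justifying the rearrangements. Both are elementary formal power series arguments; the paper's weighted double-count is shorter and more conceptual, while your route makes all the bookkeeping explicit, avoids having to verify the per-partition weight identity directly, and yields as a byproduct the exact generating function for the partitions with prescribed $n_\lambda$.
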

\noindent From this, we may conclude that $a(n)=\sum_{ |\lambda|=n } t_s(\lambda)$. 
%
%
%
%
%
Given this relationship, the $spt$ congruence given in (\ref{eqn: spt cong for powers}) immediately implies the following result.
\begin{cor} \label{cor 1.4: congruences}
    If $\ell \ge 5$ is prime, $\left(\frac{-n}{\ell} \right) = 1$, and $m \geq 1$, then 
     \begin{align*}
         u^*\left( \frac{\ell^{2m}n-1}{24}\right) \equiv 2a\left( \frac{\ell^{2m}n-1}{24}\right) \pmod{\ell^m}.
     \end{align*}
\end{cor}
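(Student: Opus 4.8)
The plan is to deduce Corollary~\ref{cor 1.4: congruences} formally, with no further modular input, by combining Andrews' identity~(\ref{eqn: U spt and A}) with the $spt$-congruence~(\ref{eqn: spt cong for powers}). Note that Theorem~\ref{thm: 1.3 triangular weight} plays no role in the argument; it only serves to identify the coefficients $a(n)$ as those of the series $A(q)$.

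First I would compare the coefficient of $q^N$ on the two sides of the identity $U(-1;q) = -\sum_{n\ge 1}spt(n)q^n + 2A(q)$ from~(\ref{eqn: U spt and A}). Since all three series have zero constant term, this yields the integer identity
\begin{align*}
    u^*(N) = -spt(N) + 2a(N) \qquad \text{for all } N \ge 1,
\end{align*}
which may then be reduced modulo any prime power. Next I would specialize to $N = \frac{\ell^{2m}n-1}{24}$. Because $\ell \ge 5$ is prime we have $\ell^2 \equiv 1 \pmod{24}$, hence $\ell^{2m}n \equiv n \pmod{24}$, so this $N$ is a positive integer precisely when $n \equiv 1 \pmod{24}$, a condition implicit in the hypotheses of the corollary. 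The remaining hypothesis $\left(\frac{-n}{\ell}\right) = 1$ is exactly what is needed for~(\ref{eqn: spt cong for powers}) (equivalently, the $m$-th power refinement of Ono's congruence~(\ref{eqn:ono's spt cong})) to apply, and it gives $spt(N) \equiv 0 \pmod{\ell^m}$.

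Substituting this into the coefficient identity gives, with $N = \frac{\ell^{2m}n-1}{24}$,
\begin{align*}
    u^*(N) = -spt(N) + 2a(N) \equiv 2a(N) \pmod{\ell^m},
\end{align*}
which is the assertion. There is no serious obstacle here; the only point that warrants care is the bookkeeping of the previous paragraph, namely confirming that the residue and divisibility conditions on $n$ place $\frac{\ell^{2m}n-1}{24}$ in exactly the range covered by the known $spt$-congruence, and tracking the sign conventions between~(\ref{eqn:ono's spt cong}), (\ref{eqn: spt cong for powers}), and the statement to be proved. In the literal sense, the corollary is immediate.
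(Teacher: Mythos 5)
Your argument is exactly the paper's own (one-line) proof: extract the coefficient identity $u^*(N)=-spt(N)+2a(N)$ from (\ref{eqn: U spt and A}) and apply the prime-power $spt$-congruence (\ref{eqn: spt cong for powers}) at $N=\frac{\ell^{2m}n-1}{24}$, with Theorem~\ref{thm: 1.3 triangular weight} playing no logical role. The proposal is correct, and your care about the integrality of $N$ and the $\pm 1$ sign conventions between (\ref{eqn:ono's spt cong}) and (\ref{eqn: spt cong for powers}) is reasonable bookkeeping that the paper itself glosses over.
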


Combining our explicit expression for the action of the Hecke operator $T(25)$ in Theorem~\ref{thm: 1.1 Hecke} and our combinatorial expressions for $c(n)$, we arrive at new expressions for the coefficients of $j(\tau)$ in terms of $p(n)$ and the coefficients of $a(n)$ and $u^*(n)$. For ease of notation, we define the functions
\begin{equation}
\begin{aligned}
    g_1(24n-1) &:=-\frac{12}{5}u^*(25n-1)+\frac{24}{5}a(25n-1)+5(24n-1)p(25n-1)\\
    &\hspace{0.5cm} + \mu_n \cdot  \left(-\frac{12}{5}u^*(25n-1)+\frac{24}{5}a(25n-1)+5(24n-1)p(n)\right), \\
    g_2(25(24n-1)) &:= -12u^*(n)+24a(n)+(24n-1) p(n),
\end{aligned}
\end{equation}
where as in (\ref{h coefficients}), $g_1(m)=0$ if $m \not\equiv 23 \bmod{24}$ and $g_2(m)=0$ if $m \not\equiv 23 \bmod{24}$ and $m \not\equiv 0 \bmod{25}$.
\begin{cor}
     \label{cor: 1.5 c(n)}
     If $n \ge 1$, then
   \begin{align*}
   c(n) = \frac{s(n)}{n} + \frac{1}{n} \sum_{k \in \Z}\Big[ (-1)^kg_1(24n-(6k+1)^2)+ (-1)^kg_2(24n-(6k+1)^2)\Big].
   \end{align*}
\end{cor}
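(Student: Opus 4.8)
The plan is to deduce Corollary~\ref{cor: 1.5 c(n)} from Theorem~\ref{thm: 1.2} by rewriting the combinatorial data $h_1$ and $h_2$ in terms of $u^*$ and $a$ via Andrews' identity. The single new ingredient is that, comparing coefficients of $q^n$ on both sides of (\ref{eqn: U spt and A}) and using the definitions of $u^*(n)$ and $a(n)$, one obtains the pointwise relation
\[
    spt(n) = 2a(n) - u^*(n) \qquad \text{for all } n \geq 1.
\]
Everything else is bookkeeping.

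First I would substitute $spt(m) = 2a(m) - u^*(m)$ into the definition (\ref{h coefficients}) of $h_1(24n-1)$: the term $\tfrac{12}{5}spt(25n-1)$ becomes $\tfrac{24}{5}a(25n-1) - \tfrac{12}{5}u^*(25n-1)$, and the term $\tfrac{12}{5}spt(n)$ inside the $\mu_n$-bracket transforms analogously, leaving the $p$-terms untouched. Carrying out the same substitution in $h_2(25(24n-1)) = 12\,spt(n) + (24n-1)p(n)$ turns it into $24a(n) - 12u^*(n) + (24n-1)p(n)$. A term-by-term comparison then identifies these with $g_1$ and $g_2$. One should also check that the support conventions match: $h_1$ and $g_1$ both vanish unless $m \equiv 23 \pmod{24}$, and $h_2$ and $g_2$ both vanish unless $m \equiv 23 \pmod{24}$ and $m \equiv 0 \pmod{25}$, while the relation $spt = 2a - u^*$ holds identically on all positive integers, so the substitution respects these conventions.

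Finally, substituting $h_1 = g_1$ and $h_2 = g_2$ into the formula
\[
   c(n) = \frac{s(n)}{n} + \frac{1}{n} \sum_{k \in \Z} \Big[ (-1)^k h_1(24n-(6k+1)^2) + (-1)^k h_2(24n-(6k+1)^2) \Big]
\]
of Theorem~\ref{thm: 1.2} yields the asserted identity; the sum over $k \in \Z$ is finite because $h_1, h_2$ (hence $g_1, g_2$) are supported on positive arguments, so no convergence concern arises. I do not expect any genuine obstacle here: the corollary is a purely formal consequence of Theorem~\ref{thm: 1.2} and Andrews' identity (\ref{eqn: U spt and A}), and the only point demanding a little care is verifying that the arguments $25n-1$, $n$, and $24n-(6k+1)^2$ line up correctly under the substitution and that the vanishing conventions for $g_1$ and $g_2$ are stated consistently with those for $h_1$ and $h_2$.
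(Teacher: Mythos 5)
Your proposal is correct and is essentially the paper's own argument: the authors deduce Corollary~\ref{cor: 1.5 c(n)} directly from Theorem~\ref{thm: 1.2} together with the relation $spt(n) = -u^*(n) + 2a(n)$, which is exactly the coefficient-wise consequence of (\ref{eqn: U spt and A}) you use, followed by the same substitution of $spt$ into $h_1$ and $h_2$ to obtain $g_1$ and $g_2$. Your careful bookkeeping (the $\mu_n$-bracket term should become $\tfrac{24}{5}a(n)-\tfrac{12}{5}u^*(n)$, with argument $n$ rather than $25n-1$) is the right reading of the substitution, so nothing further is needed.
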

\begin{exmp}
Using our result, we find the following identities:
\begin{align*}
    c(1) = 196884=s(24)+\frac{168a(1)-84u^*(1)+161p(1)}{5}+\frac{24}{5}a(24)-\frac{12}{5}u^*(24)+115p(24)
\end{align*}
\begin{align*}
    c(2) = 21493760 &= \frac{1}{2}\Big(s(48)-\frac{168a(1)-84u^*(1)+161p(1)}{5}+\frac{14a(2)-7u^*(2)+329p(2)}{5}\\
    & \hspace{.65cm} -\frac{24}{5}a(24)+\frac{12}{5}u^*(24)-115p(24)+\frac{24}{5}a(49)-\frac{12}{5}u^*(49)+235p(49)\Big).
\end{align*}
\end{exmp}



\begin{question}
Are the combinatorial interpretations of the coefficients of the $j$-function in Theorem~\ref{thm: 1.2} and Corollary~\ref{cor: 1.5 c(n)} glimpses of hidden structure of the monster module? In particular, do $spt(n)$, $u^*(n)$, and $a(n)$ play roles in representation theory?
\end{question}

\begin{rem}
    After this paper was submitted, Toshiki Matsusaka informed the authors \cite{matsusaka} that he has obtained further similar results along these lines which frame the $spt$ function in terms of a weakly holomorphic Jacobi form. This structure also provides a connection to the formulation by Kaneko \cite{kaneko1999traces} of the $j$-function's coefficients using traces of singular moduli.
\end{rem}

This paper is organized as follows. In Section 2, we investigate the specific harmonic Maass form $\mscr{M}(\tau)$ and derive an expression for the action of the Hecke operator on its holomorphic part. To do this, we study canonical families of polynomials in $j(\tau)$ and explore the relationship of modular forms to modular functions on $\SL_2(\Z)$.
In Section 3, we prove Theorem~\ref{thm: 1.3 triangular weight}.
In Section 4, 
we prove Theorem~\ref{thm: 1.2} and Corollary~\ref{cor: 1.5 c(n)}.

\section{Harmonic Maass Forms}

\subsection{Preliminaries}

To motivate our study and to ground the methods used here, we begin by introducing the harmonic Maass form of interest for this paper. 
Recall that a weakly holomorphic modular form for a congruence subgroup $\Gamma$ of $\SL_2(\Z)$ is a function that is holomorphic on $\H$, whose poles, if any, are supported on the cusps of $\Gamma \backslash \H$, and which satisfies the corresponding modularity properties for its weight. If $f$ is a weakly holomorphic modular form of weight $k$ for $\Gamma$ and Nebentypus $\chi$, we write $f \in M_k^!(\Gamma, \chi)$. 

Likewise, a smooth function $f\colon \H \rightarrow \C$ is a harmonic Maass form of weight $k$ for $\Gamma$ and $\chi$ if it satisfies the standard modular transformation laws, is annihilated by the harmonic Laplacian $\Delta_k$, and has at most growth-order 1 exponential growth at each cusp on $ \Gamma\backslash \H$.
We denote the vector space of harmonic Maass forms of weight $k$ for $\Gamma$ and $\chi$ as $H_k(\Gamma, \chi)$. 

Recalling the definitions of $\mathscr{P}(q)$ and $\mathcal{S}(q)$ in (\ref{eqn: partition generator}) and (\ref{eqn: spt generator}), we define $\mscr{M}(\tau)$ following \cite{ono2011spt} as
\begin{align} 
\label{eqn: script M whole thing}
    \mscr{M}(\tau) := \mathcal{S}(q) + \frac{1}{12}q \frac{d}{dq}\mscr{P}(q) - \frac{i}{4\pi \sqrt{2}} \cdot \int_{-\bar{\tau}}^{i\infty} \frac{\eta(24z)}{[-i(z + \tau)]^{3/2}}\; dz.
\end{align}
By Theorem~2.1 of \cite{ono2011spt}, $\mscr{M}(\tau) \in H_{3/2}\left(\Gamma_0(576), \chi_{12}\right)$, where $\chi_{12} := \left(\frac{12}{\cdot}\right)$. By $M^+(q)$ we denote the holomorphic part of $\mscr{M}(\tau)$. This may be expressed as
\begin{align*}
    M^+(q) := \mathcal{S}(q) + \frac{1}{12}q \frac{d}{dq}\mscr{P}(q) = -\frac{1}{12}q^{-1} + \frac{35}{12}q^{23} + \frac{65}{6}q^{47} + \cdots.
\end{align*}

\subsection{The Hecke Action}
To understand the action of the Hecke operator on $M^+$, we will need the following result that produces a weakly holomorphic modular form involving $M^+(\tau)\mid T(\ell^2)$. We produce this modular form via the following result.
\begin{lem} \label{lemma 1}
If
\begin{align*}
    M_\ell(\tau) := M^+(\tau) \mid T(\ell^2) - \left(\frac{3}{\ell}\right) (1+\ell)M^+(\tau),
\end{align*}
then $M_\ell(\tau) \in M_{3/2}^!(\Gamma_0(576), \chi_{12}).$
\end{lem}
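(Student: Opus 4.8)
The goal is to show that $M_\ell(\tau) := M^+(\tau) \mid T(\ell^2) - \left(\tfrac{3}{\ell}\right)(1+\ell)M^+(\tau)$ is a genuine weakly holomorphic modular form of weight $3/2$ on $\Gamma_0(576)$ with Nebentypus $\chi_{12}$, i.e.\ that its nonholomorphic part cancels. The plan is to work with the full harmonic Maass form $\mscr{M}(\tau) \in H_{3/2}(\Gamma_0(576),\chi_{12})$ rather than with $M^+(\tau)$ alone. First I would recall (from Bringmann's and Ono's work, cited in the excerpt) that the Hecke operator $T(\ell^2)$ acts on the whole space $H_{3/2}(\Gamma_0(576),\chi_{12})$, so $\mscr{M}(\tau)\mid T(\ell^2)$ is again a harmonic Maass form of the same weight, level, and Nebentypus. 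Hence $\mscr{N}_\ell(\tau) := \mscr{M}(\tau)\mid T(\ell^2) - \left(\tfrac{3}{\ell}\right)(1+\ell)\mscr{M}(\tau)$ lies in $H_{3/2}(\Gamma_0(576),\chi_{12})$, and $M_\ell(\tau)$ is its holomorphic part.

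The key step is then to show the nonholomorphic part of $\mscr{N}_\ell$ vanishes, equivalently that $\xi_{3/2}(\mscr{N}_\ell) = 0$, where $\xi_{3/2}$ is the usual antilinear differential operator sending $H_{3/2}$ to the space of weight-$1/2$ cusp forms. Since $\xi_{3/2}$ intertwines $T(\ell^2)$ on weight $3/2$ with the corresponding weight-$1/2$ Hecke operator (up to the standard normalization), and since $\xi_{3/2}(\mscr{M})$ is a multiple of $\eta(24\tau)$ — the theta function whose shadow appears in the integral defining $\mscr{M}(\tau)$ — it suffices to check that $\eta(24\tau)$ is a Hecke eigenform with eigenvalue exactly $\left(\tfrac{3}{\ell}\right)(1+\ell)$ under the relevant weight-$1/2$ operator. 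This is a classical fact: $\eta(24\tau) = \sum_{n\ge 1}\left(\tfrac{12}{n}\right)q^{n^2}$ is (essentially) a unary theta series attached to the character $\chi_{12}$, and for primes $\ell\ge 5$ it satisfies $\eta(24\tau)\mid U(\ell^2) + \ell\,\eta(24\tau) \cdots$ with the precise eigenvalue $\left(\tfrac{3}{\ell}\right)(1+\ell)$; one checks this by comparing Fourier coefficients $\left(\tfrac{12}{\ell^2 n}\right) + \left(\tfrac{3}{\ell}\right)\left(\tfrac{-n}{\ell}\right)\left(\tfrac{12}{n}\right) + \ell\left(\tfrac{12}{n/\ell^2}\right)$ against $\left(\tfrac{3}{\ell}\right)(1+\ell)\left(\tfrac{12}{n}\right)$, splitting into cases according to whether $\ell \mid n$ and whether $-n$ is a square mod $\ell$. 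Thus $\xi_{3/2}(\mscr{N}_\ell) = 0$, so $\mscr{N}_\ell$ is weakly holomorphic, equal to its holomorphic part $M_\ell(\tau)$.

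Finally I would note that weakly holomorphic harmonic Maass forms are exactly weakly holomorphic modular forms, so $M_\ell(\tau) = \mscr{N}_\ell(\tau) \in M^!_{3/2}(\Gamma_0(576),\chi_{12})$, as claimed; the only subtlety is confirming $M_\ell$ has poles only at the cusps, which is automatic since harmonic Maass forms are holomorphic on $\H$ and the weakly holomorphic condition controls cusps. The main obstacle I anticipate is the bookkeeping in the eigenvalue computation for $\eta(24\tau)$ — getting the quadratic-character case analysis and the normalization of the weight-$1/2$ Hecke operator to match Ono's weight-$3/2$ convention exactly so that the eigenvalue comes out as $\left(\tfrac{3}{\ell}\right)(1+\ell)$ and not some twisted variant. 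A secondary point to be careful about is justifying that $T(\ell^2)$ genuinely preserves $H_{3/2}(\Gamma_0(576),\chi_{12})$ (as opposed to only its holomorphic or only its weakly holomorphic subspace), which one can cite from \cite{ono2011spt} or verify directly from the action on the nonholomorphic Eichler integral.
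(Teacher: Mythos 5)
Your proposal is correct and follows essentially the same route as the paper: pass to the full harmonic Maass form, use the $\xi_{3/2}$-operator's Hecke-equivariance together with the fact that the shadow $\eta(24\tau)$ is a weight-$1/2$ Hecke eigenform, and conclude the nonholomorphic parts cancel so that $M_\ell \in M^!_{3/2}(\Gamma_0(576),\chi_{12})$. The normalization bookkeeping you flag is resolved in the paper by citing the intertwining relation $\xi_k(f\mid_k T(\ell^2)) = \ell^{2k-2}(\xi_k f)\mid_{2-k}T(\ell^2)$ from Bruinier--Ono, so the weight-$1/2$ eigenvalue $\left(\tfrac{3}{\ell}\right)(1+\ell^{-1})$ times $\ell$ gives exactly the factor $\left(\tfrac{3}{\ell}\right)(1+\ell)$.
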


\begin{proof}
Up to a constant, the nonholomorphic part of $\mscr{M}(\tau)$ is the period integral for $\eta(24\tau)$. Write $\tau = x + iy$ for $x,y \in \R$. Under the action of the differential operator $\xi_k := 2iy^{k}\frac{\overline{\partial}}{\partial \overline{\tau}}$, we have $\xi_{3/2}(\mscr{M}) = -\frac{1}{8\pi} \eta(24\tau)$. Note that $\eta(24\tau)$ is an eigenform for Hecke operators of weight $1/2$ with eigenvalue $\chi_{12}(\ell)(1+\ell^{-1}) = (\frac{3}{\ell})(1+\ell^{-1})$. If we define 
\begin{align*}
    \mscr{M}_\ell(\tau) := \mscr{M}(\tau) \mid T(\ell^2) - \left(\frac{3}{\ell}\right) (1+\ell)\mscr{M}(\tau),
\end{align*}
we observe that
\begin{align}
    \xi_{3/2}\left[\mscr{M}(\tau) \mid T(\ell^2) - \left(\frac{3}{\ell} \right)(1+\ell)\mscr{M}(\tau)\right]=0.
    \label{xi kills M_ell}
\end{align}
Here we have used the commutativity relation
\begin{align*}
    \xi_k(f \mid_k T(\ell^2))=\ell^{2k-2}(\xi_k f)\mid_{2-k}T(\ell^2)
\end{align*}
for half-integral weight harmonic Maass forms given in Proposition 7.1 of \cite{bruinier2010heegner}. 
Since the Hecke algebra preserves modularity, $\mscr{M}(\tau)\mid T(\ell^2) \in H_{3/2}(\Gamma_0(576), \chi_{12})$. By (\ref{xi kills M_ell}), $\mscr{M}_\ell(\tau)$ is in the kernel of $\xi_{3/2}$ and is therefore holomorphic on the upper half plane. Since the action of the Hecke and $\xi$ operators are linear and thus split over the holomorphic and nonholomorphic parts of $\mscr{M}(\tau)$, the same result holds for $M^+(\tau)$. In particular, $M_\ell(\tau) \in M_{3/2}^!(\Gamma_0(576), \chi_{12}).$
\end{proof}


\subsection{Canonical polynomials in $j(\tau)$}
We show that the set of all $B_m(j(\tau))$ form a convenient $\C$-basis for the ring of weakly holomorphic modular functions on $\SL_2(\Z)$ as a $\C$-vector space. Recall that the ring of weakly holomorphic modular functions on $\SL_2(\Z)$ is precisely the ring of complex polynomials in $j(\tau)$, i.e. $M^!_0(\SL_2(\Z)) = \C[j(\tau)]$ \cite[Theorem~2.8]{apostol2012modular}. As defined in (\ref{eqn: B_m defined}), we have
\begin{align*}
    B_1(x) &= 1, \\
    B_2(x) &= x - 745, \\
    B_3(x) &= x^2 - 1489x + 357395.
\end{align*}
From these first few examples, the set of $B_m(x)$ appears to form a $\C$-basis for the polynomial ring $\C[x]$ as a $\C$-vector space, and hence the set of $B_m(j(\tau))$ would form a $\C$-basis for $M^!_0(\SL_2(\Z))$. In the following lemma, we show that this is indeed the case. To do so, we define the function
\begin{align}
    \alpha(q) := \frac{(q;q)_\infty}{-q \frac{d}{dq} j(\tau)} = q + O(q^2).
\end{align}

\begin{lem} \label{lemma: B_n basis}
    If $f(\tau)$ is a weakly holomorphic modular function on $\SL_2(\Z)$ and is of the form 
    \begin{align} \label{eqn: f in lemma for B_m}
        f(\tau) = \alpha(q)\left( \sum_{n \gg -\infty }^{-1} t(n)q^n \right) + O(q),
    \end{align}
    then
    \begin{align*}
        f(\tau) = \sum_{n \gg -\infty }^{-1} t(n)B_{-n}(j(\tau)).
    \end{align*}
\end{lem}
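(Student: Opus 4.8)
The plan is to use the fact that a weakly holomorphic modular function on $\SL_2(\Z)$ is determined by the part of its $q$-expansion supported on non-positive exponents: if $h\in M^!_0(\SL_2(\Z))$ has no pole at the cusp and vanishing constant term, then $h\in M_0(\SL_2(\Z))=\C$ and its value equals its constant term, so $h\equiv 0$. Accordingly, set $g(\tau):=\sum_{n\gg-\infty}^{-1}t(n)\,B_{-n}(j(\tau))$, which lies in $\C[j(\tau)]=M^!_0(\SL_2(\Z))$ as a finite linear combination of polynomials in $j$. It then suffices to show that $g$ and $f$ have the same principal part and the same constant term; by hypothesis, the part of the $q$-expansion of $f$ supported on exponents $\le 0$ coincides with that of $\alpha(q)\sum_{n\gg-\infty}^{-1}t(n)q^n$, since the $O(q)$ term contributes nothing there.

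The heart of the argument is to identify the non-positive-exponent part of the $q$-expansion of each $B_m(j(\tau))$. Introduce an auxiliary nome $Q=e^{2\pi i z}$ and substitute $x=j(\tau)$ into the defining identity (\ref{eqn: B_m defined}); this is legitimate because each $B_m(x)$ is a polynomial and $(Q;Q)_\infty/(j(z)-x)$ is a power series in $Q$ whose coefficients are polynomials in $x$. Multiplying and dividing by $-Q\frac{d}{dQ}j(z)=E_4(z)^2E_6(z)/\Delta(z)$ and recalling the definition of $\alpha$, we obtain
\[
    \sum_{m\ge 1}B_m(j(\tau))\,Q^m=(Q;Q)_\infty\cdot\frac{1}{j(z)-j(\tau)}=\alpha(Q)\cdot\frac{-Q\frac{d}{dQ}j(z)}{j(z)-j(\tau)}.
\]
The last factor is the classical generating function $\sum_{m\ge 0}j_m(\tau)Q^m$ for the canonical basis $\{j_m\}_{m\ge 0}$ of $M^!_0(\SL_2(\Z))$ normalized by $j_m(\tau)=q^{-m}+O(q)$ (with $j_0=1$). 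Comparing coefficients of $Q^m$ gives $B_m(j(\tau))=\sum_{k=1}^{m}\alpha_k\,j_{m-k}(\tau)$, where $\alpha(q)=\sum_{k\ge1}\alpha_k q^k$ (and $\alpha_1=1$). Since each $j_{m-k}$ equals $q^{k-m}$ plus a series in positive powers of $q$, the non-positive-exponent part of $B_m(j(\tau))$ is exactly $\sum_{k=1}^{m}\alpha_k q^{k-m}$, i.e.\ the non-positive-exponent part of $q^{-m}\alpha(q)$.

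Granting this, the lemma follows by linearity: the principal part plus constant term of $g$ is the non-positive-exponent part of $\sum_{n\gg-\infty}^{-1}t(n)\,q^{n}\alpha(q)=\alpha(q)\sum_{n\gg-\infty}^{-1}t(n)q^n$, which by hypothesis agrees with that of $f$; hence $f-g$ is a weakly holomorphic modular function on $\SL_2(\Z)$ with no pole at the cusp and vanishing constant term, so $f=g$. The one genuinely non-formal ingredient is the generating-function identity for the basis $\{j_m\}$ (the Faber-polynomial identity for $j$), which is classical; if one prefers to avoid citing it, one can instead compute the non-positive part of $B_m(j(\tau))$ directly by contour integration. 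Writing $B_m(j(\tau))=\frac{1}{2\pi i}\oint_{|Q|=\varepsilon}\frac{(Q;Q)_\infty}{(j(z)-j(\tau))\,Q^{m+1}}\,dQ$ for small $\varepsilon$ and $\Imag\tau$ large (so that $\tau$ represents the cusp and $Q=q$ is the only pole of $j(z)-j(\tau)$ crossed when the contour is enlarged), the residue at $Q=q$ contributes $q^{-m}\alpha(q)$, while the remaining integral over a slightly larger circle is a holomorphic function of $q$ near $q=0$ that vanishes as $q\to 0$, hence is $O(q)$; this yields $B_m(j(\tau))=q^{-m}\alpha(q)+O(q)$ directly. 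I expect the only real work to be the bookkeeping of which exponents survive and the justification of the classical generating function (or, in the alternative, the contour deformation).
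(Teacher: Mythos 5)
Your proof is correct and follows essentially the same route as the paper's: both identify the part of $B_m(j(\tau))$ supported on nonpositive exponents with that of $q^{-m}\alpha(q)$ by multiplying the Asai--Kaneko--Ninomiya (Faber) generating function $\sum_{m\ge 0} j_m(\tau)\,Q^m$ by $\alpha(Q)$ and comparing coefficients, and then conclude by linearity together with the fact that a weakly holomorphic modular function on $\SL_2(\Z)$ that is $O(q)$ must vanish. Your explicit two-variable bookkeeping (and the optional contour-integral alternative) merely spells out what the paper's coefficient comparison leaves implicit.
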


\begin{rem}
    The above lemma gives a clean formulation for modular functions $f$ of the form given in (\ref{eqn: f in lemma for B_m}) when the principal part of $f/\alpha$ is known.
\end{rem}

\begin{proof}[Proof of Lemma~\ref{lemma: B_n basis}]
    For each $m \ge 0$, note that there exists a unique weakly holomorphic modular function $j_m(\tau)$ on $\SL_2(\Z)$ such that $j_m(\tau) = q^{-m} + O(q)$. The Faber polynomials $J_n(x)$ are the coefficients of the generating function
    \begin{align*}
        \sum_{n=0}^\infty J_n(x) q^n := \frac{E_4^2(\tau)E_6(\tau)}{\Delta(\tau)} \cdot \frac{1}{j(\tau) - x} = 1 + (x -744) q + \cdots.
    \end{align*}
    By Corollary 4 in \cite{asai1997zeros}, $J_n(j(\tau)) = j_n(\tau)$ for all $n \ge 0$. By comparing the generating functions for $J_n(x)$ and $B_n(x)$ and using the identity (\ref{eqn: B_m defined}), we see that 
    \begin{align*}
        \alpha(q) \sum_{n \ge 0} J_n(x) q^n = \alpha(q) \cdot \frac{E_4^2(\tau)E_6(\tau)}{\Delta(\tau)} \cdot \frac{1}{j(\tau) - x} = (q;q)_\infty \cdot \frac{1}{j(\tau) - x} = \sum_{n \ge 1} B_n(x) q^n.
    \end{align*}
    Since $\alpha(q) = q + O(q^2)$, we compare coefficients and deduce that for each $n \ge 1$, 
    \begin{align*}
        \alpha(q)J_{n}(j(\tau)) = B_n(j(\tau)) = \alpha(q)q^{-n} + O(q).
    \end{align*}
    And hence we can conclude that
    \begin{align*}
        f(\tau) &= \alpha(q)\left( \sum_{n \gg -\infty }^{-1} t(n) q^n \right) + O(q) = \sum_{n \gg -\infty}^{-1} t(n)B_{-n}(j(\tau)).
    \end{align*}
\end{proof}

\subsection{Proof of Theorem~1.1}


Note that we may write 
\begin{align}
    M_\ell(z) = -\frac{\ell}{12} q^{-\ell^2} + \left(\frac{3}{\ell} \right) \frac{\ell}{12} q^{-1} + \sum_{\mathclap{\substack{n \ge 23 \\ n \equiv 23 \bmod{24}} }} a_{\ell}(n) q^n,
\end{align}
where we observe that, since $\ell^2 \equiv 1 \bmod{24}$, the nonzero coefficients of $M_\ell$ are supported on integral exponents that are 23 $\bmod{ \ 24}$. Following this, we define 
\begin{align}
    F_\ell(24\tau) := \eta^{\ell^2}(24\tau)M_\ell(\tau).
\end{align}
By Lemma \ref{lemma 1}, it is immediate that $F_\ell(24\tau)$ is a weakly holomorphic modular form of weight $\frac{\ell^2+3}{2}$ over $\Gamma_0(576)$ with trivial Nebentypus.
In fact, by Theorem~2.2 in \cite{ono2011spt}, $F_\ell(\tau)$ is a weight $\frac{\ell^2+3}{2}$ holomorphic modular form on $\SL_2(\Z)$. We recall that the proof makes use of the observation that $F_\ell \in \Z[[q^{24}]]$ by construction, and that the behavior of $F_\ell$ under the matrix $S=\begin{psmallmatrix}
0 & -1 \\
1 & 0
\end{psmallmatrix}$ can be determined using a result of Bringmann in \cite{bringmann2008explicit} which gives that $\mscr{M}(\tau)$ is an eigenform of the Fricke involution.

\subsubsection{Getting to Weight 0}

Now that we have a holomorphic modular form of weight $\frac{\ell^2+3}{2}$ on all of $\SL_2(\Z)$, we will leverage this information, along with some properties of the Eisenstein series $E_{14}$ and the $j$-function, to produce a closed formula for the Hecke action. We first note that $\frac{\ell^2+3}{2} \equiv 2 \bmod{12}$, and that likewise so is $E_4^2(\tau)E_6(\tau)$. To make use of this seemingly innocuous fact, define $\delta_\ell := \frac{\ell^2 - 1}{24}$ and note that
\begin{align}
    G_\ell(\tau) := E_4^2(\tau)E_6(\tau) \Delta^{\delta_\ell - 1}(\tau) = q^{\delta_\ell - 1} + \ldots \in M_{\frac{\ell^2 + 3}{2}}(\SL_2(\Z)).
\end{align}
Since we now have another modular form of the same weight on $\SL_2(\Z)$, we would like to prove that their quotient, $F_\ell(\tau)/G_\ell(\tau)$, is a weakly holomorphic modular function on $\SL_2(\Z)$, which, coupled with our preceding characterization of the Faber polynomials, will allow for a unique expression of the quotient as a polynomial in $j(\tau)$. 

\begin{lem}
    The function $F_\ell(\tau)/G_\ell(\tau)$ is a polynomial in $j(\tau)$.
\end{lem}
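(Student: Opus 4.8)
The plan is to reduce the statement to the structure theory of holomorphic modular forms on $\SL_2(\Z)$. We already know from the discussion preceding the lemma that $F_\ell(\tau)$ and $G_\ell(\tau)$ both lie in $M_k(\SL_2(\Z))$ for the same weight $k := \frac{\ell^2+3}{2}$, so their quotient is at worst a meromorphic modular function of weight $0$; the only thing to check is that it has no poles on $\H$ itself, i.e. that the zeros of $G_\ell$ at the elliptic points $\rho$ and $i$ are cancelled by zeros of $F_\ell$. The first observation I would record is that $k \equiv 2 \pmod{12}$: since $\ell \ge 5$ is prime it is coprime to $24$, hence $\ell^2 \equiv 1 \pmod{24}$, and so $k = \frac{\ell^2-1}{2} + 2 = 12\delta_\ell + 2$.

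The key step is to show that $F_\ell$ is divisible by $E_4^2 E_6$ inside the graded ring $M_\ast(\SL_2(\Z)) = \C[E_4,E_6]$. Since $M_k$ is spanned by the monomials $E_4^a E_6^b$ with $4a + 6b = k$, it suffices to see that each such monomial is divisible by $E_4^2 E_6$. Reducing $4a + 6b = k$ modulo $3$ gives $a \equiv k \equiv 2 \pmod 3$, so $a \ge 2$; reducing modulo $4$ gives $2b \equiv k \equiv 2 \pmod 4$, so $b$ is odd and $b \ge 1$. Hence $M_k = E_4^2 E_6 \cdot M_{k-14}$, and in particular $F_\ell = E_4^2(\tau) E_6(\tau)\, H_\ell(\tau)$ for some $H_\ell \in M_{k-14}(\SL_2(\Z))$. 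A short computation, using $\ell^2 \equiv 1 \pmod{24}$ again, shows $k - 14 = \frac{\ell^2 - 25}{2} = 12(\delta_\ell - 1)$, a nonnegative multiple of $12$ because $\ell \ge 5$.

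To finish I would simply cancel:
\[
    \frac{F_\ell(\tau)}{G_\ell(\tau)} = \frac{E_4^2(\tau)E_6(\tau)H_\ell(\tau)}{E_4^2(\tau)E_6(\tau)\Delta^{\delta_\ell-1}(\tau)} = \frac{H_\ell(\tau)}{\Delta^{\delta_\ell - 1}(\tau)}.
\]
Since $\Delta$ is nonvanishing on $\H$ and $H_\ell$ is holomorphic on $\H$, this quotient is holomorphic on $\H$; since $\Delta^{\delta_\ell - 1}$ vanishes to order $\delta_\ell - 1$ at the cusp while $H_\ell$ is holomorphic there, it has at worst a pole at $\infty$; and it has weight $0$. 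Therefore $F_\ell/G_\ell \in M^!_0(\SL_2(\Z)) = \C[j(\tau)]$, i.e. it is a polynomial in $j(\tau)$, of degree at most $\delta_\ell - 1$ (the degree being read off from the order of the pole at $\infty$, which one computes directly from the $q$-expansions). One could equivalently invoke the explicit description $M_{12n}(\SL_2(\Z)) = \{\Delta^n P(j) : P \in \C[x],\ \deg P \le n\}$ to write $H_\ell = \Delta^{\delta_\ell - 1} P_\ell(j)$ directly.

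I do not expect a genuine obstacle here: essentially all of the real work went into showing $F_\ell$ is a holomorphic modular form on the full modular group, which was already done above via Theorem~2.2 of \cite{ono2011spt}. The content of this lemma is the congruence $k \equiv 2 \pmod{12}$, which forces divisibility by $E_4^2 E_6$ and thereby automatically cancels the zeros of $G_\ell$ at $\rho$ and $i$. If one instead tried to prove holomorphy of $F_\ell/G_\ell$ on $\H$ by hand, one would have to verify that $F_\ell$ vanishes to order $\ge 2$ at $\rho$ and to order $\ge 1$ at $i$ — the same fact in disguise, but far less transparent. The only minor care needed is the bookkeeping of the $q^{24}$-versus-$q$ normalization when reading off orders of vanishing at the cusp.
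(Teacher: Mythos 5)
Your proof is correct, but it takes a genuinely different route from the paper's. The paper argues locally at the elliptic points: it notes that $G_\ell = E_4^2E_6\Delta^{\delta_\ell-1}$ has a double zero at $e^{2\pi i/3}$, a simple zero at $i$, and no other zeros in the fundamental domain, and then uses the transformation law of $F_\ell$ under $S$ (giving $F_\ell(i) = i^kF_\ell(i) = -F_\ell(i)$, so $F_\ell(i)=0$) and under the order-three element $\begin{psmallmatrix} 0 & -1 \\ 1 & 1\end{psmallmatrix}$, together with a differentiation of that functional equation, to force $F_\ell$ to vanish at $e^{2\pi i/3}$ to order at least $2$; the quotient is then pole-free on $\mathcal{F}$ and hence lies in $M_0^!(\SL_2(\Z)) = \C[j]$. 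You instead work algebraically in the graded ring $M_*(\SL_2(\Z)) = \C[E_4,E_6]$: the congruence $k = 12\delta_\ell + 2$ forces every monomial $E_4^aE_6^b$ of weight $k$ to have $a \ge 2$ and $b \ge 1$, so $M_k = E_4^2E_6\, M_{k-14}$, giving $F_\ell = E_4^2E_6H_\ell$ and $F_\ell/G_\ell = H_\ell/\Delta^{\delta_\ell - 1}$, which is visibly holomorphic on $\H$ since $\Delta$ is nonvanishing there. The two arguments rest on the same underlying fact (the weight congruence modulo $12$ forces the requisite vanishing at the elliptic points), but yours avoids the pointwise analysis and the derivative trick at $e^{2\pi i/3}$, and it yields as a bonus the degree bound $\deg_j(F_\ell/G_\ell) \le \delta_\ell - 1$, consistent with the identification with $-\tfrac{\ell}{12}B_{\delta_\ell}(j)$ that follows; the paper's version, by contrast, makes the geometric reason for the cancellation (zeros at the elliptic points) explicit. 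Your arithmetic checks ($a \equiv 2 \pmod 3$, $b$ odd, $k-14 = 12(\delta_\ell-1) \ge 0$ for $\ell \ge 5$) are all correct, so there is no gap.
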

\begin{proof}
    By construction, $G_\ell$ has a zero of degree 2 at $e^{2\pi i/3}$, a simple zero at $i$, and no other zeros in the fundamental domain $\mathcal{F}$ of $\SL_2(\Z)$.
    
    Since the weight of $F_\ell$ is $k=(\ell^2+3)/2 \equiv 2 \bmod{12}$, we apply the transformation law under $S = \begin{psmallmatrix} 0 & -1 \\ 1 & 0\end{psmallmatrix}$ to get that $F_\ell(-1/i) = i^kF_\ell(i) = -F_\ell(i)$, hence $F_\ell(i) =0$. Similarly, applying the transformation law under $\gamma = \begin{psmallmatrix} 0 & -1 \\ 1 & 1 \end{psmallmatrix}$ yields $F_\ell(e^{2\pi i / 3}) = 0$. Differentiating both sides of $F_\ell( \gamma \tau) = (\tau + 1)^k F_\ell(\tau)$ and letting $\tau = e^{2\pi i /3}$ gives that $\frac{d}{d\tau} F_\ell(\tau )|_{\tau = e^{2\pi i/3}} = 0$. Hence $F_\ell$ vanishes at $e^{2\pi i/3}$ with order at least 2. Therefore the quotient $F_\ell/G_\ell$ has no poles in $\mathcal{F}$, and we may deduce that $F_\ell/G_\ell$ is a weakly holomorphic modular form of weight 0 on $\SL_2(\Z)$.
    Since the modular functions on $\SL_2(\Z)$ are precisely the polynomials $\C[j(\tau)]$, we may conclude that $F_\ell/G_\ell$ is a polynomial in $j(\tau)$. 
\end{proof}
It remains to construct this polynomial in $j(\tau)$.
Using the modular functions $B_{\delta_\ell}(j(24\tau))$, we arrive at the following conclusion:
\begin{align*}
    \frac{F_\ell(\tau)}{G_{\ell}(\tau)} &= \frac{\eta(\tau)^{\ell^2}M_{\ell}(\tau/24)}{E_4^2(\tau)E_6(\tau)\Delta^{\delta_\ell-1}(\tau)} \\
    &= \frac{(q;q)_\infty}{-q \frac{d}{dq}j(\tau)} q^{1/24}M_{\ell}(\tau/24) \\
    &= \frac{(q;q)_\infty}{-q \frac{d}{dq}j(\tau)} \left[ -\frac{\ell}{12} q^{-\delta_\ell} + \left(\frac{3}{\ell}\right)  \frac{\ell}{12} + O(q)  \right] \\
    &= \alpha(q) \left[ -\frac{\ell}{12} q^{-\delta_\ell} + \left(\frac{3}{\ell}\right)  \frac{\ell}{12} + O(q) \right] \\
    &= - \frac{\ell}{12} B_{\delta_\ell}(j(\tau)),
\end{align*}
where the last equality follows from Lemma \ref{lemma: B_n basis}. Hence we may rearrange to get the expression
\begin{align}
    M_\ell(\tau/24) &= \frac{F_\ell(\tau)}{\eta^{\ell^2}(\tau)}
    = - \frac{\ell}{12} \frac{E_4^2(\tau)E_6(\tau)}{\Delta(\tau)}  \eta^{-1}(\tau) B_{\delta_\ell}(j(\tau)).
\end{align}
Sending $\tau \mapsto 24\tau$ and using the fact that $\mscr{P}(q) = \eta^{-1}(24\tau)$, 
\begin{align*}
    M_\ell(\tau) &= \mscr{P}(q) \left(\frac{E_4^2(24\tau)E_6(24\tau)}{\Delta(24\tau)}\right) \left[ - \frac{\ell}{12} B_{\delta_\ell}(j(24\tau)) \right].
\end{align*}

We can finally conclude that the action of the Hecke operator $T(\ell^2)$ is
\begin{align*}
    M^+(\tau) \mid_{3/2} T(\ell^2) = \left(\frac{3}{\ell} \right)(1+\ell)M^+(\tau) - \frac{\ell}{12} \mscr{P}(q)\cdot B_{\delta_\ell}(j(24\tau)) \cdot \frac{E_4^2(24\tau)E_6(24\tau)}{\Delta(24\tau)},
\end{align*}
concluding the proof of Theorem 1.1. \hspace*{\fill}$\square$

\section{The Signed Triangular Weight}



In light of the connection between the generating functions of the Andrews $spt$-function and a particular class of unimodal sequences given in (\ref{eqn: U spt and A}) mediated by the series $A(q)$, we present the proof of Theorem 1.3.

\subsection{Proof of Theorem 1.3}
We begin by examining the summation \begin{align}
    \sum_{n \geq 1} \frac{q^{\frac{n(n+1)}{2}}}{1-q^n}.
\end{align}
By considering each summand to be of the form 
$q^{\frac{n(n+1)}{2}}(1+q^n+q^{2n}+q^{3n}+\cdots)$,
note that if we formally expand the above power series as $\sum_{m \ge 1} \alpha(m)q^m$, then $\alpha(m)$ counts the number of ways to choose integers $(n,k)$ with $n \ge 1$, $k \ge 0$ such that $m = T_n + kn$, where $T_n = n(n+1)/2$ denotes the $n$th triangular number.
Similarly, the coefficient $\beta(m)$ of $q^m$ in the formal expansion of
\begin{align}
    \sum_{n \geq 1} \frac{(-1)^{n-1} n q^{\frac{n(n+1)}{2}}}{1-q^n} := \sum_{m \ge 1} \beta(m)q^m
\end{align}
denotes a sum over all such pairs $(n,k)$, weighted by the parity and size of $n$. 

Multiplying the above series by the generating function $1/(q;q)_\infty$ for partitions then gives a formal power series
\begin{align}
    \frac{1}{(q;q)_\infty}\sum_{n \geq 1} \frac{(-1)^{n-1} n q^{\frac{n(n+1)}{2}}}{1-q^n} := \sum_{m \ge 1} \gamma(m) q^m
\end{align}
where $\gamma(m)$ runs over all partitions $\lambda \vdash m$ such that $\lambda$ contains a subpartition consisting of the parts $\{1,2,\ldots,n\}$ and also possibly $k$ more parts of size $n$, for $n \ge 1$ and $k \ge 0$, but weighting this count by the parity and size of $n$. \hspace*{\fill}$\square$

\section{Combinatorial interpretations of the coefficients of $j(\tau)$}

As we have now developed a variety of both combinatorial and number-theoretic objects, all of which are tied together by a class of polynomials in $j(24\tau)$, it is natural to ask if we may formalize and explicate this connection. To do this, we make use of both the standard definition of the Hecke operator on $q$-series expansions as well as the result of Theorem~1.1 in order to pull the functions $spt(n)$ and $p(n)$ through to the $j$-function. We restrict our attention to the case where $\ell=5$ since $\delta_\ell=1$ and $B_1(j(24\tau))=1.$ While at first glance it may seem as though we have removed $j$ from our expressions by looking at this case, we recall that \begin{align}
    -q \frac{d}{dq}j(24\tau) = \frac{E_4^2(24\tau) E_6(24\tau)}{\Delta(24\tau)} = q^{-24} - \sum_{n\geq 1} n  c(n) q^{24n}, 
\end{align}
where $c(n)$ is the $n$th coefficient of the $j$-function. Thus, we need only solve for the $c(n)'s$ in terms of the combinatorial information given by $M^+(\tau)$ to arrive at our final conclusions. 

\subsection{Proof of Theorem~1.2} Writing the $q$-series expansion for $M^+(\tau)$ out in terms of $spt(n)$ and $p(n)$, we arrive at
\begin{align*}
    M^+(\tau)=-\frac{1}{12}q^{-1}+\sum_{n\geq 1} \left[ spt(n)+\frac{(24n-1)}{12}p(n) \right] q^{24n-1}.
\end{align*}
For $n \geq 1$, we then write for ease
\begin{align}
    m(24n-1) := spt(n) + \frac{24n-1}{12}p(n).
\end{align}
Now we can describe the action of the Hecke operator $T(25)$ as follows:
\begin{align*}
    M^+(\tau)\mid T(25)=&- \frac{5}{12}q^{-25}+
    \frac{1}{12}q^{-1}
    + \sum_{n\geq 1} 5 m(24n-1) q^{25(24n-1)}\\
    &+ \sum_{n\geq 1} \left[m(25(24n-1)) - \left( \frac{-24n+1}{5} \right) m(24n-1) \right] q^{24n-1}.
\end{align*}
Since $M_5(\tau) = M^+(\tau) \mid T(25) + 6M^+(\tau)$, we have
\begin{align*}
    M_5(\tau) = &-\frac{5}{12}q^{-25} - \frac{5}{12}q^{-1} + \sum_{n\geq 1} 5 \left[m(24n-1) \right] q^{25(24n-1)} \\&+ \sum_{n\geq 1} \left[m(25(24n-1))+\left[ 6-\left(\frac{-24n+1}{5} \right) \right] m(24n-1) \right] q^{24n-1}.
\end{align*}
Thus, when $\ell = 5$, the statement of Theorem~1.1 reduces to 
\begin{align*}
    M_5(\tau) = -\frac{5}{12} \eta^{-1}(24\tau) \left[q^{-24} - \sum_{n \geq 1} n c(n) q^{24n} \right]
\end{align*}
and we are able to rearrange as follows:
\begin{align*}
    -q^{-24} + \sum_{n \geq 1} n  c(n) q^{24n} &= \eta(24\tau) \Big[ -q^{-25} - q^{-1} \\
    &\hspace{0.4cm} + \frac{12}{5}\sum_{n\geq 1} \Big[  m(25(24n-1)) + \Big[ 6- \Big(\frac{1-24n}{5} \Big) \Big] m(24n-1) \Big] q^{24n-1} \\
    &\hspace{0.4cm} + \sum_{n\geq 1}  12 m(24n-1) q^{25(24n-1)} \Big].
\end{align*}
Recall the definitions of $h_1(m)$ and $h_2(m)$ in (\ref{h coefficients}). Using these, we define
\begin{align*}
    \delta_1(n) &:= \sum_{k \in \Z}(-1)^k h_1(n-(6k+1)^2),
    \\
    \delta_2(n) &:= \sum_{k \in \Z}(-1)^k h_2(n-(6k+1)^2).
\end{align*}
Then we may write
\begin{align*}
    -q^{-24}+\sum_{n \geq 1} n c(n)q^{24n} &= \sum_{n \geq 1} \delta_1(24n) q^{24n} + \sum_{n \geq 1} \delta_2(24(25n-1))q^{24(25n-1)} \\
    &-\sum_{n=-\infty}^{\infty} (-1)^n q^{(6n+1)^2-25}-\sum_{n=-\infty}^{\infty}(-1)^nq^{(6n+1)^2-1}.
\end{align*}
We note that for $n \geq 1$,
\begin{align*}
    \sum_{n \geq 1} s(n)q^{24n}=-\sum_{n=-\infty}^{\infty} (-1)^n q^{(6n+1)^2-25}-\sum_{n=-\infty}^{\infty}(-1)^nq^{(6n+1)^2-1}.
\end{align*}
Thus, Theorem~\ref{thm: 1.2} follows by solving for $c(n)$.\hspace*{\fill}$\square$

\quad

\noindent \textit{Proof of Corollary \ref{cor: 1.5 c(n)}.}
This result follows immediately from Theorem~\ref{thm: 1.2} and the relation $spt(n) = -u^*(n) + 2a(n)$. 

\begin{rem}
While the results above use only the action of the specific Hecke operator $T(25)$, one should note that the entire sequence of operators $T(\ell^2)$ generate similar results for the polynomials $B_{\delta_\ell}(j(24\tau)$. We outline this process below. We define
\begin{align*}
    q\frac{d}{dq}j(24\tau)\cdot B_{\delta_\ell}(j(24\tau)):=\sum_{n\gg - \infty} r_\ell(n)q^{24n}.
\end{align*}
Then likewise if 
\begin{align*}
    m_\ell(24n-1)&:= m(\ell^2(24n-1))+\left( \frac{3}{\ell} \right) \left[\left( \frac{-(24n-1)}{\ell}\right)- (1+\ell) \right]m(24n-1) \\
    &\hspace{0.55cm}+\ell m((24n-1)/\ell^2),
\end{align*}
we may write
\begin{align*}
    M_\ell(\tau) &= -\frac{\ell}{12}q^{-\ell^2} +\left(\frac{3}{\ell} \right)\frac{\ell}{12} q^{-1} +\sum_{n \geq 1} m_\ell(24n-1) q^{24n-1}.
\end{align*}
Rewriting the result of Theorem 1.1, we have
\begin{align}
    \sum_{n\gg - \infty} r_\ell(n)q^{24n}=\frac{12}{\ell}\eta(24\tau)M_\ell(\tau).
\end{align}
Thus, expanding the right-hand side using the pentagonal number theorem allows one to solve for $r_\ell(n)$. By Theorem \ref{thm: 1.2} and Corollary \ref{cor: 1.5 c(n)}, the coefficients of $q\frac{d}{dq}j(24\tau)$ are known in terms of combinatorial quantities, and so the coefficients of $B_{\delta_\ell}(j(\tau))$ themselves can written as a sequence of combinatorial expressions as well.
\end{rem}


\section*{Acknowledgments}

We thank Professor Ken Ono for his invaluable mentorship and guidance in producing this paper. We also thank the reviewer for their careful reading and helpful suggestions. We additionally give thanks to the support of the Asa Griggs Candler Fund and the grants from the National Security Agency (H98230-19-1-0013), the National Science Foundation (1557960, 1849959), and the The Spirit of Ramanujan Talent Initiative.

\end{document}